\newtheorem{theorem}{Theorem}[section]
\newtheorem{lemma}[theorem]{Lemma}
\newtheorem{proposition}[theorem]{Proposition}
\newtheorem{remark}[theorem]{Remark}
\numberwithin{equation}{section}
\newtheorem{definition}[theorem]{Definition}
\numberwithin{equation}{section}
\numberwithin{equation}{section}
\newtheorem{Basic assumptions}[theorem]{Basic assumptions}
\numberwithin{equation}{section}
\numberwithin{equation}{section}
\newenvironment{proof}{\smallskip\noindent\textit{Proof.}}{\hfill{$\Box$}
\medskip}
\newcommand\quant{\advance\quantno by1
                      \ifnum\quantno=1\qquad\else\quad\fi\forall }
\newcommand\itemno[1]{(\romannumeral #1)}
\newcommand\rest[1]{\kern-.1em
          \lower.5ex\hbox{$\scriptstyle #1$}\kern.05em}
\newcommand\set[1]{{\left\{#1\right\}}}
\renewcommand\mod[1]{\vert{#1}\vert}
\newcommand\norm[2]{{\Vert{#1}\Vert_{#2}}}
\newcommand\wrt{\,\text{\rm d}}
\newcommand\BC{\mathbb{C}}
\newcommand\BN{\mathbb{N}}
\newcommand\BR{\mathbb{R}}
\newcommand\la{\lambda}
\newcommand\funnyk{k\hbox to 0pt{\hss\phantom{g}}}
\newcommand\Fq[1]{F^q(#1)}
\newcommand\Fqi{F^q}
\newcommand\Finfty[1]{F^\infty(#1)}
\newcommand\Fcont[1]{F^C(#1)}
\newcommand\Fconti{F^{C}}
\newcommand\Fcontc[1]{F_c^C(#1)}
\newcommand\Fcontic{F_c^{C}}
\newcommand\lu[1]{L^1(#1)}
\newcommand\laq[1]{L^q(#1)}
\newcommand\hu[1]{H^1(#1)}
\newcommand\whH{\widehat{\phantom{G}}\hbox to 0pt{\hss $H$}}
\newcommand\emspace{\hbox to 6pt{\hss}}
\newcommand\ds{\displaystyle}
\newcommand\rmi{\hbox{\rm (i)}}
\newcommand\rmii{\hbox{\rm (ii)}}
\date{}
\begin{document}

\title{Equivalence of norms on \\
finite linear combinations of atoms
\thanks{Work partially supported by the
Italian Progetto Cofinanziato ``Analisi Armonica'', 2007.}
}

\author{Giancarlo Mauceri{\thanks{Dipartimento di Matematica, 
Universit\`a di Genova,
via Dodecaneso 35, 16146 Genova, Italy -- mauceri@dima.unige.it}}
\and Stefano Meda{\thanks{
Dipartimento di Matematica e Applicazioni, Universit\`a di Milano-Bicocca,
via R.~Cozzi 53, I-20125 Milano, Italy
-- stefano.meda@unimib.it
}}}

\maketitle

\begin{abstract}
Let $(M,\rho,\mu)$ be a space of homogeneous type and denote by \break  $\Fcontc{M}$ the space of finite linear combinations of continuous $(1,\infty)$-atoms with compact support. In this note we give a simple function theoretic proof of the equivalence on $\Fcontc{M}$ of the $H^1$-norm and the norm defined in terms of \emph{finite} linear combinations of atoms. The result holds also for the class of nondoubling metric measure spaces considered in previous works of the authors and A. Carbonaro.
\end{abstract}

\setcounter{section}{0}

\section[Introduction]
{Introduction}
Suppose that $q$ is in $(1,\infty]$.
A function $a$ in $\lu{\BR^d}$ is said to be a $(1,q)$-atom
if it is supported in a ball $B$ in $\BR^d$, and satisfies the following
conditions
$$
\int_{\BR^d} a(x) \wrt\la(x) = 0
\qquad \qquad\norm{a}{q} \leq \la(B)^{-1/q'},
$$
where $\la$ denotes the Lebesgue measure on $\BR^d$ and 
$q'$ is the conjugate index to $q$.
Denote by $\Fq{\BR^d}$ the vector space of all \emph{finite} 
linear combinations of $(1,q)$-atoms, endowed with 
the norm $\norm{\cdot}{F^q}$, defined as follows
$$
\norm{f}{F^q}
= \inf \Bigl\{ \sum_{j=1}^N \mod{\la_j}: f = \sum_{j=1}^N \la_j \, a_j,
\, \, 
\hbox{$a_j$ is a $(1,q)$-atom, $N\in\BN^+$} \Bigr\}.
$$
In \cite{MSV1}, the authors proved that if $q$ is finite,
then the $F^q$ norm and the restriction
to $\Fq{\BR^d}$ of the atomic $\hu{\BR^d}$ norm 
(defined just below (\ref{f: decomposition})) are equivalent.
The proof hinges on the atomic decomposition
and the maximal characterisation of $\hu{\BR^d}$, and is quite technical.
In the same paper, the authors proved a similar result for $\Fcont{\BR^d}$, a space
defined much as $\Fq{\BR^d}$, but with
continuous $(1,\infty)$-atoms in place $(1,q)$-atoms.
In \cite{GLY} the authors,
by adapting the techniques of \cite{MSV1}, succeeded to extend
these results to homogeneous 
spaces that satisfy an additional property, called property $RD$.
Apparently, there are serious obstructions in extending
this approach to all spaces of homogeneous type.
\par
F.~Ricci and J.~Verdera \cite{RV} complemented the analysis in 
\cite{MSV2} by proving that the dual of 
the completion of $\Finfty{\BR^d}$ is the direct sum of 
$BMO(\BR^d)$ and a nontrivial Banach space.
They observed also that variations of the main argument
in the proof of \cite[Thm~1]{RV} provide an alternative proof of 
the equivalence of the $\Fq{\BR^d}$ and the $\hu{\BR^d}$
norms on $\Fq{\BR^d}$ for $q<\infty$.  Their argument
revolves around the Gelfand--Naimark theory for
the commutative Banach algebra of all bounded functions
on $\BR^d$ that vanish at infinity.  
\par
The purpose of this paper is to show that an easy function theoretic approach  yields the equivalence  of the $\Fcont{M}$ and $\hu{M}$
norms on $\Fcont{M}$, 
in a very general setting, including all locally compact
spaces $M$ of homogeneous type and many interesting locally doubling
measured metric spaces, like those introduced in \cite{CMM1,CMM2}. We shall actually prove that the same result holds if we replace the space $\Fcont{M}$ with the space $\Fcontc{M}$ of finite linear combinations of continuous $(1,\infty)$-atoms \emph{with compact support}. Notice that  the space $\Fcontc{M}$ may be strictly smaller than $\Fcont{M}$. For instance, if $M$ is the open upper half-plane, endowed with the Euclidean metric and the Lebesgue measure, and $B$ is the ball with centre $(0,1)$ and radius $2$, there are $\Fcont{M}$-atoms supported in $B$ that do not have compact support. 
A similar approach works in the case where $q<\infty$
and gives the equivalence of the $\Fq{M}~$ and $\hu{M}$
norms on $\Fq{M}$ when $M$ is as above. In this case we do not even need to assume that $M$ is locally compact.
Our proof does not make use of the Gelfand--Naimark
theory for commutative Banach algebras, although our
main idea, which is to prove that the dual of $\Fcontc{M}$
is just $BMO(M)$, was inspired by \cite{RV}. 
\par
For the sake of simplicity, we shall restrict to spaces of infinite measure.
The case of spaces of finite measure may be treated similarly,
with very few changes due to the exceptional constant atom
and the slighty different definition of $BMO$. 
\par
We mention that our result is related to the extendability
of linear operators defined on finite linear combinations of atoms.   
Denote by $Y$ a Banach space and suppose that $q$ is in $(1,\infty]$.
We say that~$Y$ has the $q$-\emph{extension property}
if every $Y$-valued linear operator $T$, defined on \emph{finite}
linear combinations of $(1,q)$-atoms and satisfying the condition
\begin{equation} \label{f: extension property}
\sup\{ \norm{T a}{Y}: \hbox{$a$ is a $(1,q)$-atom} \} < \infty,
\end{equation}
extends to a bounded operator from $\hu{M}$ to $Y$. 
Similarly, 
we say that $Y$ has the \emph{continuous $\infty$-extension property}
if $T$ satisfies the condition above with continuous
$(1,\infty)$-atoms in place of $(1,q)$-atoms.

On the one hand, no Banach space $Y$ has
the $\infty$-extension property. 
Indeed, a direct consequence of a recent result of
M.~Bownik \cite{Bow} is that for every Banach space $Y$
there exists a $Y$-valued operator $B$, defined on finite linear combinations
of $(1,\infty)$-atoms that satisfies
\begin{equation} \label{f: unif bound}
\sup\{ \norm{B a}{Y}: \hbox{$a$ is a $(1,\infty)$-atom} \} < \infty,
\end{equation}
but does not admit an extension to a bounded operator from $\hu{\BR^d}$
to $Y$.  
\par
On the other hand, every Banach space $Y$ has the $q$-extension property
for all $q$ in $(1,\infty)$, and the continuous $\infty$-extension
property.  
This is proved in \cite{YZ} in the case where $q=2$,
and, independently, in \cite{MSV1} for all $q$ in $(1,\infty)$ 
and in the continuous $\infty$ case.  
\par
Our analysis is limited to the Hardy space $\hu{M}$,
because the applications we have in mind are to the space $H^1$ rather
than to $H^p$ with $p$ in $(0,1)$.  We leave the investigation of the 
interesting case $p$ in $(0,1)$ to further studies. 
An elegant analysis of the case $p\leq 1$ in the Euclidean setting
may be found in \cite{RV}.  
See also the papers \cite{MSV1, MSV2, YZ}.

\thanks{The authors thank Detlef M\"uller for a useful suggestion that led to a simplification in the proof of Theorem \ref{t: main}}.

\section[Notation and background]{Notation and background information}
\label{s: Notation}
Suppose that $(M, \rho, \mu)$ is a space of homogeneous type. 
In particular, $\rho$ is a quasi-distance on $M$
and $\mu$ is a regular Borel measure on $M$.  We shall
assume that $\mu(M) = \infty$ and 
that $0<\mu(B)<\infty$ for all balls in $M$.
We refer the reader to \cite{CW} for any unexplained notation
concerning spaces of homogeneous type. 

\begin{definition} \label{d: standard atom}
\emph{Suppose that $q$ is in $(1,\infty]$.  A $(1,q)$-\emph{atom} 
$a$ associated to a ball $B$ is a function in $L^q(M)$ 
supported in $B$
with the following properties:
\begin{enumerate}
\item[\itemno1]
$\norm{a}{q}  \leq \mu (B)^{-1/q'}$, where $q'$ denotes the index
conjugate to $q$;
\item[\itemno2]
$\ds \int_B a \wrt \mu  = 0$.
\end{enumerate}}
\end{definition}
\par
\noindent
The \emph{Hardy space} $H^{1,q}({M})$ is the 
space of all functions~$g$ in $\lu{M}$
that admit a decomposition of the form
\begin{equation} \label{f: decomposition}
g = \sum_{k=1}^\infty \la_k \, a_k,
\end{equation}
where $a_k$ is a $(1,q)$-atom and $\sum_{k=1}^\infty \mod{\la_k} < \infty$.
The norm $\norm{g}{H^{1,q}}$
of $g$ is the infimum of $\sum_{k=1}^\infty \mod{\la_k}$
over all decompositions (\ref{f: decomposition})
of $g$. All the $\norm{\phantom{\cdot}}{H^{1,q}}$ norms are equivalent. Hence the vector spaces $H^{1,q}(M)$ coincide. Henceforth we shall simply denote by $\norm{\phantom{\cdot}}{H^{1}}$ the norm $\norm{\phantom{\cdot}}{H^{1,\infty}}$ and by $H^1(M)$ the space $H^{1,\infty}(M)$.\par
The dual of the space $H^1(M)$ can be identified with the quotient of the space $BMO(M)$ of functions of bounded mean oscillation modulo the constants, endowed with any of the equivalent norms 
$$
\norm{f}{BMO^{q'}}=\sup_{B}\left(\frac{1}{\mu(B)}\int_B\mod{f-\langle f\rangle_{\!B}}^{q'}\wrt \mu\right)^{1/q'},
$$
where $1\le q'<\infty$, $B$ ranges over all balls in $M$ and $\langle f\rangle_B$ denotes the average of $f$ over $B$. We shall denote  the norm  $\norm{\phantom{\cdot}}{BMO^1}$ simply by $\norm{\phantom{\cdot}}{BMO}$.
\par
For each $q$ in $(1,\infty)$ we denote by
$\Fq{M}$ the normed space of all finite
linear combinations of $(1,q)$-atoms.   The norm
on $\Fq{M}$ is defined as follows
\begin{equation} \label{f: huqfin norm}
\norm{f}{F^q}
= \inf \Bigl\{ \sum_{j=1}^N \mod{\la_j}: f = \sum_{j=1}^N \la_j \, a_j,
\, \, 
\hbox{$a_j$ is a $(1,q)$-atom, $N\in\BN^+$} \Bigr\}.
\end{equation}
We denote by $\Fcont{M}$
the normed space of all finite linear combinations
of \emph{continuous} $(1,\infty)$-atoms and by $\Fcontc{M}$ the space of all finite linear combinations
of {continuous} $(1,\infty)$-atoms with \emph{compact support}. We shall refer to these atoms as $\Fcont{M}$-atoms and $\Fcontc{M}$-atoms, respectively. The 
$\Fcont{M}$ and the  $\Fcontc{M}$ norms are defined as the $\Fq{M}$
norm above, but with $\Fcont{M}$-atoms  and $\Fcontc{M}$-atoms, respectively, in place
of $(1,q)$-atoms.
Obviously 
\begin{equation} \label{f: hu-Ffin} 
\norm{f}{H^{1}} 
\leq  \norm{f}{\Fconti}\leq  \norm{f}{\Fcontic}
\quant f \in \Fcontc{M},
\end{equation}
and \begin{equation} \label{f: hu-huqfin} 
\norm{f}{H^{1,q}} 
\leq  \norm{f}{F^q}
\quant f \in \Fq{M}.
\end{equation}

 For each \emph{open} ball $B$ in $M$, 
we denote by  $C_c(B)$  the subspace of all  functions with compact support contained in $B$, \emph{endowed with the supremum norm}. We assume that $M$ is locally compact, so that  the  dual $C_c(B)^*$ of $C_c(B)$ is the space of complex Borel measures on  $B$.

 We denote by $C_c(M)$ the space of all continuous functions on $M$ with compact support endowed with the finest linear topology such that all the inclusion maps $C_c(B)\subset C_c(M)$ are continuous. The dual of $C_c(M)$ is the space $C_c(M)^*$ of all linear functionals $\nu$ on $C_c(M)$, whose restriction $\nu_B$ to $C_c(B)$ is a continuous linear functional on $C_c(B)$  for every ball $B$.  Thus $\nu_B$ is a complex Borel measure on $B$, and we shall denote by $\mod{\nu_B}$ its total variation. If $\nu\in C_c(M)^*$, then its total variation is the nonnegative $\sigma$-finite Radon measure on $M$, defined by
$$
\mod{\nu}(E)=\sup_B \mod{\nu_B}(E)
$$
for all Borel subset $E$ of $M$. By applying the Radon-Nikodym theorem to the restrictions of $\nu$ to a sequence of balls that invades $M$, it is easy to see that there exists a Borel function $h$ such that $\nu(\phi)=\int_M \phi\, h\wrt\mod{\nu}$ for all $\phi$ in $C_c(M)$,  and $\mod{h}=1$ on $M$. Henceforth, to simplify notation, we shall denote by $\nu$ all the measures $\nu_B$. \par We denote by $C_{c,0}(M)$ and $C_{c,0}(B)$ the subspaces of $C_{c}(M)$ and $C_{c}(B)$, respectively, of all functions such that $\int_Mf \wrt \mu=0$. Note that the $\Fcontc{M}$, as a vector space, coincides with the space $C_{c,0}(M)$.

\begin{remark}\label{ann}
 The annihilator of $C_{c,0}(M)$ 
in $C_c(M)^*$ is $\BC\mu$. 
Indeed, $\mu$ annihilates $C_{c,0}(M)$ 
by definition and, if $\nu$ is a linear functional on $C_c(M)$ that annihilates  $C_{c,0}(M)$, 
then $\nu=\alpha\mu$ for some $\alpha\in\BC$, 
because  $\ker(\mu)\subset\ker(\nu)$. 
Thus, the dual of $C_{c,0}(M)$ is isomorphic to the quotient $C_c(M)^*/\set{\BC\mu}$. Similarly, the dual of $C_{c,0}(B)$ is isomorphic to the quotient $C_c(B)^*/\set{\BC\mu}$. 
\end{remark}
\begin{definition}\label{mbmo}
We denote by $\langle\nu\rangle_{\!B}$ the quotient $\nu(B)/\mu(B)$. We say that a linear functional $\nu\in  C_c(M)^*$ has bounded mean oscillation if
\begin{equation}\label{*bmo}
\norm{\nu}{BMO}=\sup_B \frac{1}{\mu(B)} \left| \nu-\langle\nu\rangle_{\!B}\,\mu\right|(B)<\infty.
\end{equation}
\end{definition}
\begin{remark}\label{rembmo1}
Observe that for any ball $B$
\begin{equation}\label{f: equiv}
\inf_\alpha\left|\nu-\alpha\mu\right|(B)\le \left|\nu-\langle\nu\rangle_{\!B}\,\mu\right|(B)\le 2\, \inf_\alpha\left|\nu-\alpha\mu\right|(B)
\end{equation}
Indeed, the first inequality is obvious. To prove the second, add 
and subtract $\alpha\mu$  from $\nu-\langle\nu\rangle_{\!B}\,\mu$,
 apply the triangle inequality  and observe that \break $\left|\alpha -\langle\nu\rangle_{\!B}\right|\mu(B)\le\mod{\alpha\mu-\nu}(B)$.
Thus,
in Definition \ref{mbmo} we may allow arbitrary 
constants $c_B$ in place of $\langle\nu\rangle_{\!B}$. 
\end{remark}
\begin{remark}\label{rembmo2}
Since $\big|\mod{\nu}-\mod{\langle\nu\rangle_{\!B}}\mu\big|(B)\le \mod{\nu-\langle\nu\rangle_{\!B}\mu}(B)$ for every ball $B$, by the previous remark $\mod{\nu}$ has bounded mean oscillation whenever $\nu$ does. 
\end{remark}

\begin{lemma}\label{mbmo=bmo}
If $\nu\in C_c^*(M)$ has bounded mean oscillation
then there exists a function $f$ in $BMO(M)$ such that $\nu=f\mu$. Moreover $\norm{\nu}{BMO}=\norm{f}{BMO}$.
\end{lemma}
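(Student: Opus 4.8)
The plan is to show that the boundedness of the mean oscillation of $\nu$ forces $\nu$ to be absolutely continuous with respect to $\mu$, and then to read off the $BMO$ norm identity from the Lebesgue decomposition. First I would invoke the representation recalled just before the lemma: there is a $\sigma$-finite Radon measure $\mod{\nu}$ and a Borel function $h$ with $\mod{h}=1$ such that $\wrt\nu = h \wrt\mod{\nu}$. Applying the Lebesgue decomposition of the $\sigma$-finite measure $\mod{\nu}$ with respect to $\mu$, I write $\mod{\nu} = g\,\mu + \sigma$ with $g\in L^1_{\mathrm{loc}}(\mu)$, $g\ge 0$, and $\sigma\perp\mu$. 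Setting $f = hg$ and $\nu_s = h\sigma$ yields the decomposition $\nu = f\mu + \nu_s$ into absolutely continuous and singular parts, with $\mod{\nu_s}=\sigma$ since $\mod{h}=1$; in particular $f\in L^1_{\mathrm{loc}}(\mu)$.

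The key step is to prove that $\nu_s=0$. Because the absolutely continuous and singular parts are carried by disjoint sets, total variation is additive, so on every ball $B$
$$
\mod{\nu-\langle\nu\rangle_{\!B}\,\mu}(B) = \int_B \mod{f-\langle\nu\rangle_{\!B}}\wrt\mu + \mod{\nu_s}(B).
$$
Dropping the first (nonnegative) summand and using the definition of $\norm{\nu}{BMO}$ gives $\mod{\nu_s}(B)\le \norm{\nu}{BMO}\,\mu(B)$ for every ball $B$. Thus $\sigma=\mod{\nu_s}$ is singular with respect to $\mu$ and yet dominated by a fixed multiple of $\mu$ on all balls. In a space of homogeneous type this is impossible unless $\sigma=0$: by the Lebesgue differentiation theorem the upper derivative $\limsup_{r\to0}\sigma(B(x,r))/\mu(B(x,r))$ equals $+\infty$ at $\sigma$-almost every $x$, contradicting the uniform bound $\le\norm{\nu}{BMO}$ unless $\sigma$ vanishes. (Alternatively one argues directly: given $\ep>0$, by outer regularity choose an open $U$ containing the $\mu$-null set on which $\sigma$ is concentrated with $\mu(U)<\ep$, cover it by balls contained in $U$, extract a disjoint subfamily by the basic covering lemma, and use the doubling property to obtain $\sigma(U)\le C\,\norm{\nu}{BMO}\,\ep$.)

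Once $\nu_s=0$ we have $\nu=f\mu$, whence $\langle\nu\rangle_{\!B}=\nu(B)/\mu(B)=\langle f\rangle_{\!B}$, and the displayed identity collapses to
$$
\frac{1}{\mu(B)}\mod{\nu-\langle\nu\rangle_{\!B}\,\mu}(B)=\frac{1}{\mu(B)}\int_B\mod{f-\langle f\rangle_{\!B}}\wrt\mu .
$$
Taking the supremum over all balls gives $\norm{\nu}{BMO}=\norm{f}{BMO^1}=\norm{f}{BMO}$; since this supremum is finite, $f$ indeed belongs to $BMO(M)$, completing the proof.

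The main obstacle I expect is exactly the vanishing of $\nu_s$: everything else is bookkeeping with the Lebesgue decomposition, but ruling out a singular part requires genuinely using the geometry of the space of homogeneous type, namely a covering (or differentiation) argument together with the doubling property, and it is here that the standing hypotheses $0<\mu(B)<\infty$ and local compactness enter.
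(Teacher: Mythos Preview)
Your argument is correct and follows essentially the same route as the paper's: bound the singular part on balls by the $BMO$ norm, then use the basic covering lemma with doubling and outer regularity to conclude $\sigma\ll\mu$, hence $\sigma=0$. The only cosmetic differences are that the paper passes first to $\mod{\nu}$ (via the remark that $\mod{\nu}$ inherits bounded mean oscillation) and obtains the ball estimate by restricting to the set carrying the singular part rather than by your additivity-of-total-variation identity, and that the paper uses exactly your parenthetical alternative (covering plus outer regularity), not the Lebesgue differentiation argument.
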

\begin{proof}
By Remark \ref{rembmo2} the Radon measure $\mod{\nu}$ has bounded mean oscillation.
Let $\mod{\nu}=\mod{\nu}_{a}+\mod{\nu}_{s}$ be the Lebesgue decomposition  of $\mod{\nu}$ into its absolutely continuous and singular parts with respect to $\mu$. 
Denote by $N$ a set that carries $\mod{\nu}_{s}$. Then $\mod{\nu}_{s}(E\cap N)=\mod{\nu}_{s}(E)$ for every Borel subset 
$E$ of $M$ and
$\mu(N)=\mod{\nu}_{a}(N)=0$. Let $B$ be an open ball in $M$. 
Then, since both $\mod{\nu}_{a}(B\cap N)$ and $\mu(B\cap N)$ are
 zero,
 \begin{align*}
\mod{\nu}_{s}(B\cap N)&=\mod{\nu}(B\cap N) -
\langle\mod{\nu}\rangle_{\!B}\,\mu(B\cap N)\\ 
&\le \big|\mod{\nu}-\langle\mod{\nu}\rangle_{\!B}\,\mu\big|(B) \\ 
&\le  \norm{\mod{\nu}}{BMO}\,\mu(B).
\end{align*}
Since
$\mod{\nu}_{s}(B)=
 \mod{\nu}_{s}(B\cap N)$, we have proved that for all open balls $B$ in $M$
 \begin{equation}\label{f: nusB}
\mod{\nu}_{s}(B)\le \norm{\mod{\nu}}{BMO}\,\mu(B).
\end{equation}
Now, (\ref{f: nusB}) implies that there exists a constant $C$ such that
$$
\mod{\nu}_{s}(U)\le C\, \norm{\mod{\nu}}{BMO}\,\mu(U).
$$
for every bounded open set $U$ in $M$. 
Indeed, by the basic covering theorem \cite[Thm 1.2]{H} there  exists a family $\{B^j\}$
of mutually disjoint open balls such~that
$$
\bigcup_j B^j 
\subseteq U \subseteq
\bigcup_j (5 B^j).
$$
The family $B^j$ is countable, because $\mu(U)<\infty$.
Then 
\begin{align}\label{doub}
\mod{\nu}_{s}(U)
& \leq  \sum_j  \mod{\nu}_{s}(5B^j) \nonumber \\
& \leq\,  \norm{\mod{\nu}}{BMO} \, \sum_j  \mu(5B^j) \nonumber\\
& \leq C\, \norm{\mod{\nu}}{BMO}  \, \sum_j  \mu(B^j) \\
& \leq  C\, \norm{\mod{\nu}}{BMO}  \, \mu(U), \nonumber
\end{align}
as required. 
Since $\mod{\nu}_{s}$ is a $\sigma$-finite Radon measure, it is outer regular on all Borel sets,
whence
$$
\mod{\nu}_{s}(E)
\leq C\,\norm{\mod{\nu}}{BMO}  \, \mu(E)
$$
for every bounded Borel set $E$ in $M$. 
This inequality implies that the singular measure 
$\mod{\nu}_{s}$ is also absolutely continuous
with respect to $\mu$.   Hence $\mod{\nu}_{s}=0$, and 
$\mod{\nu}$ is absolutely continuous with respect to $\mu$. Therefore there exists a function $f$ such that $\nu=f\mu$. It is straightforward to verify that $\norm{f}{BMO}=\norm{\nu}{BMO}$.
\end{proof}
\begin{remark}\label{r: locdoub}
Note that the assumption that the measure $\mu$ is doubling is used only in the proof of the inequality (\ref{doub}). Actually, it suffices to assume that $\mu$  satisfies the doubling condition only on balls of radius at most $1$.
\end{remark}
\section[The main result]{The main result} \label{s: The main result}

The equivalence of the norms on $H^1(M)$ and $F^C_c(M)$ will be an easy consequence of Proposition \ref{FCcduale} below, which shows that the embeddings of $F^C_c(M)$ and of $\Fq{M}$ in $H^1(M)$ induce isomorphisms of the duals. 

If $\nu\in H^1(M)^*$ we denote by $\nu^\flat$ its restriction to $F^C_c(M)$.
\begin{proposition}\label{FCcduale} The following hold
\begin{itemize}
\item[\rmi] If $M$ is locally compact then the map $\nu\mapsto\nu^\flat$ is an isomorphism of $H^1(M)^*$ onto $F^C_c(M)^*$.
\item[\rmii] If $1<q<\infty$, then a similar statement holds with $\Fcontic(M)$ replaced by $\Fq{M}$ (without assuming that $M$ be locally compact).
\end{itemize}

\end{proposition}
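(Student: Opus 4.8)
The plan is to exploit the identification $\hu{M}^{*}\cong BMO(M)$ modulo constants recorded above, and to prove that restriction to $\Fcontc{M}$ realises this dual exactly as $\Fcontc{M}^*$. I would first dispose of the two easy points for part \rmi. The map $\nu\mapsto\nu^\flat$ is bounded with norm at most $1$, since $\norm{f}{H^{1}}\le\norm{f}{\Fcontic}$ for $f\in\Fcontc{M}$ by \pref{f: hu-Ffin}. It is injective: if $\nu^\flat=0$, write $\nu=f\mu$ with $f\in BMO(M)$; then $f\mu$ annihilates $\COc{M}=\Fcontc{M}$, so $f\mu\in\BC\mu$ by Remark \ref{ann}, i.e.\ $\nu=0$ in $\hu{M}^{*}$. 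All the content of \rmi\ is thus in proving that $\nu\mapsto\nu^\flat$ is onto, together with a quantitative bound making its inverse bounded; the two combined give the isomorphism (a bounded bijection with bounded inverse).

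For surjectivity fix $\phi\in\Fcontc{M}^*$. The elementary but decisive observation is that, for any open ball $B$ and any $f\in\COc{B}$, the function $f/\bigl(\mu(B)\,\norm{f}{\infty}\bigr)$ is a continuous $(1,\infty)$-atom supported in $B$; hence $\norm{f}{\Fcontic}\le\mu(B)\,\norm{f}{\infty}$, and therefore $\abs{\phi(f)}\le\norm{\phi}{*}\,\mu(B)\,\norm{f}{\infty}$ for every $f\in\COc{B}$. In particular $\phi$ is continuous on $\COc{M}$ for the inductive-limit topology, so by Remark \ref{ann} --- this is where local compactness is used, to identify the local duals with measures --- there is $\nu\in\CcM^*$, unique modulo $\BC\mu$, with $\nu=\phi$ on $\COc{M}$.

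The heart of the proof is to show that $\nu$ has bounded mean oscillation, with $\norm{\nu}{BMO}\le 2\,\norm{\phi}{*}$; I would do this ball by ball. On a fixed open ball $B$ the restriction of $\nu$ to $\COc{B}$ equals $\phi$ and so, by the estimate above, is continuous for the supremum norm with norm at most $\norm{\phi}{*}\,\mu(B)$. Extend it by Hahn--Banach to a functional $\Lambda$ on $C_c(B)$ of the same norm; then $\nu-\Lambda$ annihilates $\COc{B}$, hence equals $\alpha\mu$ on $B$ for some scalar $\alpha$ (the local form of Remark \ref{ann}). Consequently $\abs{\nu-\alpha\mu}(B)=\norm{\Lambda}{}\le\norm{\phi}{*}\,\mu(B)$, so $\inf_\alpha\abs{\nu-\alpha\mu}(B)\le\norm{\phi}{*}\,\mu(B)$, and Remark \ref{rembmo1} yields $\norm{\nu}{BMO}\le 2\,\norm{\phi}{*}$. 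Now Lemma \ref{mbmo=bmo} furnishes $f\in BMO(M)$ with $\nu=f\mu$ and $\norm{f}{BMO}=\norm{\nu}{BMO}\le 2\,\norm{\phi}{*}$. Viewing $f$ as an element of $\hu{M}^{*}=BMO(M)$, we have $f^\flat(g)=\int_M fg\wrt\mu=\nu(g)=\phi(g)$ for all $g\in\Fcontc{M}$, which proves surjectivity with the required bound. This Hahn--Banach reduction on each ball, converting uniform boundedness of $\phi$ on atoms into bounded oscillation of the representing measure, is the step I expect to be the main obstacle.

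For \rmii\ the scheme is unchanged, with the supremum norm replaced by the $\laq{B}$ norm and Borel measures by $L^{q'}$ functions, so that neither local compactness nor Lemma \ref{mbmo=bmo} is needed. Indeed, for $f\in\laqO{B}$ the function $f/\bigl(\mu(B)^{1/q'}\norm{f}{q}\bigr)$ is a $(1,q)$-atom, giving $\norm{f}{F^q}\le\mu(B)^{1/q'}\norm{f}{q}$ and $\abs{\phi(f)}\le\norm{\phi}{*}\,\mu(B)^{1/q'}\norm{f}{q}$. By $L^q$--duality $\phi$ is represented on each $\laqO{B}$ by a function in $\lqprime{B}$, determined up to an additive constant; these representatives patch to a single $f\in\lqprimeloc{M}$ with $\phi(g)=\int_M fg\wrt\mu$ on $\Fq{M}$, and on each ball $\inf_c\norm{f-c}{\lqprime{B}}\le\norm{\phi}{*}\,\mu(B)^{1/q'}$, that is $\norm{f}{BMO^{q'}}\le 2\,\norm{\phi}{*}$. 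As $BMO^{q'}(M)=BMO(M)$ with equivalent norms, $f\in\hu{M}^{*}$ restricts to $\phi$; injectivity follows as before, since $f\mu$ orthogonal to every mean-zero $L^q$ function on every ball forces $f$ to be locally, hence globally, constant.
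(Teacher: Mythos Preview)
Your proposal is correct and follows essentially the same route as the paper. The only cosmetic difference is that, on each ball, you invoke Hahn--Banach explicitly to produce $\Lambda$ and deduce $\inf_\alpha\mod{\nu-\alpha\mu}(B)\le\norm{\phi}{*}\,\mu(B)$, whereas the paper phrases the same step as the quotient isometry $C_{c,0}(B)^*\cong C_c(B)^*/\BC\mu$; these are equivalent formulations of the same fact.
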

\begin{proof} First we prove \rmi. 
By (\ref{f: hu-Ffin}) the linear functional $\nu^\flat$ is in $F^C_c(M)^*$ and
\begin{equation}\label{1stineq}
\norm{\nu^\flat}{F^C_c(M)^*}\le \norm{\nu}{H^1(M)^*}.
\end{equation}
Next we prove that the map $\nu\mapsto \nu^\flat$ is injective. Suppose that $\nu^\flat=0$. Let $f$ be a function in $BMO(M)$ that represents $\nu$, i.e. such that $\nu(\phi)=\int_Mf\phi\wrt\mu$ for all finite linear combinations of atoms $\phi$. Since $F^C_c(M)=C_{c,0}(M)$,
$$
\int_M f\phi\wrt\mu=\nu^\flat(\phi)=0 \qquad\forall \phi\in C_{c,0}(M).
$$
Thus $f$ is constant and $\nu=0$. Therefore $\nu\mapsto\nu^\flat$ is injective.\par
Next, we show that $\nu\mapsto \nu^\flat$ is surjective. 
Let $\lambda$ be in $F^C_c(M)^*$.
Since $F^C_c(M)=C_{c,0}(M)$ has codimension $1$ in $C_c(M)$, the functional $\lambda$ extends to a continuous linear functional $\nu$ on $C_c(M)$. We prove that $\nu$ has bounded mean oscillation and 
\begin{equation}\label{leq2}
\norm{\nu}{BMO}\le \,2\, \norm{\lambda}{F^C_c(M)^*}.
\end{equation}
For every ball $B$, the restriction of $\nu$ to the subspace $C_c(B)$ is a complex Borel measure $\nu_B$ on $B$, which we shall henceforth denote simply by $\nu$ to simplify notation. For every function $\phi$ in $C_{c,0}(B)$, the function $\phi/(\mu(B)\norm{\phi}{\infty})$ is an atom supported in $B$. Thus
\begin{equation}\label{nula}
\mod{\nu(\phi)}=\mod{\lambda(\phi)}\le \norm{\lambda}{F^C_c(M)^*}\ \mu(B) \ \norm{\phi}{\infty}\qquad\forall \phi\in C_{c,0}(B).
\end{equation}
Since $C_{c,0}(B)^*$ is isometric to the quotient $C_c(B)^*/\set{\BC\mu}$,
\begin{align}
\inf_\alpha \left|\nu-\alpha\mu\right|(B) \nonumber &=\norm{\nu}{C_{c,0}(B)^*} \\
&=\sup\set{\mod{\nu(\phi)}: \phi \in C_{c,0}(B), \norm{\phi}{\infty}\le 1} \label{nq}   \nonumber\\
&\le\, \norm{\lambda}{F^C_c(M)^*}\ \mu(B), \nonumber
\end{align}
by (\ref{nula}). Then, by Remark \ref{rembmo1},
\begin{equation}\label{f: MBMO}
\frac{1}{\mu(B)} \, 
\left| \nu-\langle\nu\rangle_{\!B}\,\mu\right|(B)\leq 2\,\norm{\lambda}{F^C_c(M)^*}.
\end{equation} 
Therefore $\nu$ has bounded mean oscillation and satisfies (\ref{leq2}).
By Lemma \ref{mbmo=bmo} there exists a function $f$ in $BMO(M)$ such that $\nu=f\mu$, and $\norm{f}{BMO}=\norm{\nu}{BMO}$. Hence $\nu$ extends to a continuous linear functional on $H^1(M)$, whose restriction to $F^C_c(M)$ is $\lambda$, i.e $\lambda=\nu^\flat$. Moreover, by (\ref{1stineq}) and (\ref{leq2}),
$$
\norm{\nu^\flat}{F^C_c(M)^*}\le \norm{\nu}{H^1(M)^*}\le 2\,\norm{\nu^\flat}{F^C_c(M)^*}.
$$
This concludes the proof of the first part of the theorem.\par
The proof of \rmii\ follows the same lines.  We simply replace 
$C_{c}(B)$ by the space $L^q(B)$ of all functions in 
$\laq{M}$ that are supported in $B$ and have integral zero and $C_c(M)$ by the space $L^q_c(M)$ of all functions in $L^q(M)$ with compact support, endowed with finest linear topology for which all the inclusions $L^q(B)\subset L^q_c(M)$ are continuous. The dual of $L^q_c(M)$ is the space $L^{q'}_{\text{loc}}(M)$, where $1/q+1/q'=1$.
Note that here we do not need  the assumption that $M$ be locally compact,
which is used, instead, in the proof of \rmi\ 
to identify the dual of the space of 
continuous functions with compact support contained in an open ball $B$
with the space of all complex Borel measures on $B$.
\end{proof} 

\begin{theorem} \label{t: main}

\begin{itemize}
\item[\rmi]
 If $M$ is locally compact, then   there exists a constant $C$ such that 
\begin{equation} \label{f: second main}
\norm{f}{H^1}  
\le \norm{f}{\Fcontic} \le C\ \norm{f}{H^1} 
\quant f \in \Fcontc{M}.  
\end{equation}
\item[\rmii]
For each $q$ in $(1,\infty)$  there exists a constant $C$ such that
\begin{equation} \label{f: first main}
\norm{f}{H^{1,q}}  
\le \norm{f}{\Fqi}\le C\ \norm{f}{H^{1,q}}  
\quant f \in\Fq{M}.  
\end{equation}
\end{itemize}
\end{theorem}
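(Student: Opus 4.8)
The plan is to deduce Theorem \ref{t: main} directly from Proposition \ref{FCcduale} by a Hahn--Banach duality argument, so that essentially all the work has already been carried out in the proposition. First I would observe that the left-hand inequalities in both \pref{f: second main} and \pref{f: first main} are nothing but \pref{f: hu-Ffin} and \pref{f: hu-huqfin}; hence only the right-hand inequalities require proof.

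For part \rmi, I would fix $f$ in $\Fcontc{M}$ and invoke the Hahn--Banach duality formula
\[
\norm{f}{\Fcontic} = \sup\bigl\{ \abs{\lambda(f)} : \lambda \in F^C_c(M)^*,\ \norm{\lambda}{F^C_c(M)^*} \le 1 \bigr\}.
\]
By Proposition \ref{FCcduale}\rmi\ every such $\lambda$ equals $\nu^\flat$ for a unique $\nu$ in $H^1(M)^*$, and the chain of inequalities displayed at the end of that proof furnishes the two-sided control $\norm{\nu}{H^1(M)^*} \le 2\,\norm{\lambda}{F^C_c(M)^*} \le 2$. Since $f$ lies in $\Fcontc{M}$, we have $\lambda(f) = \nu^\flat(f) = \nu(f)$, whence
\[
\abs{\lambda(f)} = \abs{\nu(f)} \le \norm{\nu}{H^1(M)^*}\,\norm{f}{H^1} \le 2\,\norm{f}{H^1}.
\]
Taking the supremum over all admissible $\lambda$ yields $\norm{f}{\Fcontic} \le 2\,\norm{f}{H^1}$, which is the required inequality with $C=2$.

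For part \rmii, the same computation goes through verbatim, using Proposition \ref{FCcduale}\rmii\ in place of \rmi\ and replacing $\Fcontic$ and $\norm{\cdot}{H^1}$ by $\Fqi$ and $\norm{\cdot}{H^{1,q}}$ throughout; no local compactness of $M$ is needed in this case, exactly as in the proposition.

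The upshot is that there is no genuine obstacle remaining: the entire difficulty was packaged into Proposition \ref{FCcduale}, whose proof simultaneously established the surjectivity of $\nu \mapsto \nu^\flat$ and the quantitative norm bound on its inverse. The only two points that deserve a moment's care are the Hahn--Banach representation of the $\Fcontic$-norm as a supremum over the dual unit ball, and the clean extraction of the constant $2$ from the final display in the proof of the proposition.
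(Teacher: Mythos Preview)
Your proposal is correct and follows essentially the same approach as the paper: both deduce the nontrivial inequality from Proposition~\ref{FCcduale} via the Hahn--Banach dual representation of the $\Fcontic$-norm, and then use the norm control on the inverse of $\nu\mapsto\nu^\flat$ established at the end of that proposition's proof. Your write-up is in fact slightly more explicit about the constant and the duality step than the paper's, but the argument is identical.
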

\begin{proof} 
We have already 
remarked that $\norm{f}{H^1}\le \norm{f}{\Fcontic}$ for all $f\in
\Fcontic(M)$, (see  (\ref{f: hu-Ffin})).  By Proposition \ref{FCcduale},  for every $f\in \Fcontic(M)$ 
\begin{align*}
\norm{f}{\Fcontic}&=\sup\set{\mod{\lambda(f)}:\lambda\in \Fcontic(M)^*, \norm{\lambda}{\Fcontic(M)^*}\le1} \\ 
&\le  \sup\set{\left|\Lambda(f)\right|: \Lambda\in H^1(M)^*,\ \norm{\Lambda}{H^1(M)^*}\le C}\\
&\le C\, \norm{f}{H^1}. 
\end{align*}
\par
A similar argument shows that $\norm{f}{\Fqi}\le C\,\norm{f}{H^{1,q}}$ for all $f$ in $\Fq{M}$. 
\end{proof}
\par\noindent 

\begin{remark}\label{r: FC} 
If $M$ is locally compact then the space $F^C_c(M)$  is dense in $H^1(M)$. This is a straightforward consequence of Proposition \ref{FCcduale} and of the Hahn-Banach theorem. 
Note that (\ref{f: hu-Ffin}), (\ref{f: second main}) and the density of $\Fcontc{M}$ in $\hu{M}$ imply that the $\Fcont{M}$ and the $\hu{M}$ norms are equivalent on $\Fcont{M}$. Thus $H^1(M)$ is the completion also  of $\Fcont{M}$ in the norm of the latter space. Similar statements hold with $F^C_c(M)$ replaced by $\Fq{M}$, without the assumption that $M$ be locally compact.
\end{remark} 

\section{Locally doubling measured metric spaces}

As mentioned in the introduction, the main result
we presented in the last section in the case of 
spaces of homogenous type, may be generalised to a variety of
settings.   
In this section we describe the generalisation
to the case of the atomic Hardy spaces on certain measured
metric spaces introduced in \cite{CMM1,CMM2}.  
We restrict to the case where the space has infinite measure.
Again, slight modifications will also cover the
finite measure case.  
\par
Suppose that $(M,\rho,\mu)$ is a measure metric space as in \cite[Section~2.1]{CMM1}. More precisely, we assume that $(M,\rho)$ satisfies the ``approximate midpoint property" and that the measure $\mu$ is locally doubling, but for the purposes of the present paper, we do not need to assume that $\mu$ satisfies the ``isoperimetric property".  
\par
Denote by $\hu{M}$ the atomic Hardy
space defined in \cite{CMM1}.
We recall that the definition of a $\hu{M}$-atom is exactly as
in the case of spaces of homogeneous type, but, unlike in the
classical case,  $H^1({M})$ is the 
space of all functions~$g$ in $\lu{M}$
that admit a decomposition of the form
\begin{equation} \label{f: decomposition I}
g = \sum_{k=1}^\infty \la_k \, a_k,
\end{equation}
where $a_k$ is a $\hu{M}$-atom \emph{supported in a ball $B$ of radius 
at most $1$},
and $\sum_{k=1}^\infty \mod{\la_k} < \infty$.
The norm $\norm{g}{H^1}$
of $g$ is the infimum of $\sum_{k=1}^\infty \mod{\la_k}$
over all decompositions (\ref{f: decomposition I}) of $g$.
Similarly,  we may define
the spaces $\Fq{M}$,  $\Fcont{M}$ and $\Fcontc{M}$, 
using only
atoms supported in balls of radius at most $1$.

Straightforward adaptations of the arguments of the previous section
yield the following result (see Remark \ref{r: locdoub}). 

\begin{theorem}  \label{t: main 2}
The following hold
\begin{itemize}
\item[\rmi]
if $M$ is locally compact, then the space $\Fcontc{M}$ is dense in $\hu{M}$ and there exists a constant $C$ such that
$$
\norm{f}{H^1}  \le  \norm{f}{\Fcontic}\le\,C\,\norm{f}{H^1} \quant f \in \Fcontc{M}.
$$
\item[\rmii]
If $1<q<\infty$, then $\Fq{M}$ is dense in $\hu{M}$ and there exists a constant $C$ such that
$$
\norm{f}{H^{1,q}}  
\le \norm{f}{\Fqi} \le \,C\,  \norm{f}{H^{1,q}} \quant f \in\Fq{M}
.$$
\end{itemize}
\end{theorem}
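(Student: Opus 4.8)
The plan is to re-run, essentially verbatim, the chain of results of Section~\ref{s: The main result} — the representation Lemma~\ref{mbmo=bmo}, the duality Proposition~\ref{FCcduale}, and the duality and Hahn--Banach arguments yielding Theorem~\ref{t: main} and Remark~\ref{r: FC} — under the two features that distinguish the present setting: every atom is now required to be supported in a ball of radius at most~$1$, and $\mu$ is only \emph{locally} doubling. I would first isolate the two places where these features actually bite, dispose of them, and then observe that the remaining steps transcribe word for word. Concretely, once the analogues of Lemma~\ref{mbmo=bmo} and Proposition~\ref{FCcduale} are in place, part~\rmi\ follows from the duality computation in the proof of Theorem~\ref{t: main} (writing $\norm{f}{\Fcontic}$ as a supremum over the unit ball of $\Fcontc{M}^*$ and pushing it up to $H^1(M)^*$ through the isomorphism), and the density of $\Fcontc{M}$ in $\hu{M}$ follows, exactly as in Remark~\ref{r: FC}, from the Hahn--Banach theorem together with the injectivity half of that isomorphism. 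Part~\rmii\ is handled the same way, with $C_c(B)$ and $C_c(M)$ replaced by the zero-integral $L^q(B)$ and by $L^q_c(M)$, and with no appeal to local compactness.

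The first point to check is the local-doubling issue in Lemma~\ref{mbmo=bmo}. Here ``bounded mean oscillation'' of $\nu$ must now be read as the supremum in \pref{*bmo} taken over balls of radius at most~$1$, which is precisely the local $BMO(M)$ that is dual to the locally defined $\hu{M}$. The only step of that proof using the doubling of $\mu$ is the passage \pref{doub}, and, as noted in Remark~\ref{r: locdoub}, it suffices to reorganise the basic covering so that the balls $B^j$ (hence the dilates $5B^j$) have radius at most~$1$; the bound $\mu(5B^j)\le C\,\mu(B^j)$ then follows from finitely many applications of the doubling condition at scales $\le 1$. With this single modification the representation $\nu=f\mu$, with $f$ in the local $BMO(M)$ and $\norm{f}{BMO}=\norm{\nu}{BMO}$, is obtained as before.

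The step I expect to be the genuine obstacle is the vector-space identity $\Fcontc{M}=\COc{M}$ (and, for part~\rmii, the identification of $\Fq{M}$ with the space of compactly supported zero-integral $L^q$ functions). This identity underlies both the codimension-one extension of $\lambda$ and the conclusion ``$f$ is constant, hence $\nu=0$'' in Proposition~\ref{FCcduale}, and in the global setting of Section~\ref{s: The main result} it was trivial: a single continuous zero-integral function supported in a ball is already a multiple of an $\Fcont{M}$-atom. With atoms now confined to balls of radius at most~$1$ this one-atom observation fails, and the inclusion $\COc{M}\subseteq\Fcontc{M}$ must be proved by a genuine localisation. The plan is: given $\phi\in\COc{M}$, cover the compact set $\supp\phi$ by finitely many balls of radius at most~$\tfrac12$, choose a subordinate continuous partition of unity $\{\psi_i\}$, and write $\phi=\sum_i\psi_i\phi$ with $c_i:=\int\psi_i\phi\wrt\mu$ and $\sum_ic_i=\int\phi\wrt\mu=0$. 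Each $\psi_i\phi$ is continuous and small-ball supported but need not have vanishing integral, so I would restore the moment condition by a telescoping correction: choosing continuous bumps supported in the overlaps of consecutive balls, each of unit integral, and subtracting the corresponding partial sums of the $c_i$, one obtains continuous, zero-integral pieces supported in balls of radius at most~$1$ whose sum is still $\phi$. Each piece is then a multiple of an $\Fcontc{M}$-atom, giving $\phi\in\Fcontc{M}$. The one nontrivial ingredient here is that consecutive covering balls can be made to overlap so that the telescoping can be carried out; this chaining is exactly what the approximate midpoint property assumed for these spaces guarantees, since it forces $M$ to be connected at small scales. For part~\rmii\ the analogous decomposition is easier, as no continuity need be preserved and $L^q$ functions may simply be restricted to the pieces of a measurable partition before the integrals are corrected.

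Once these two points are settled, Remark~\ref{ann} (the annihilator of $\COc{M}$ in $C_c(M)^*$ is $\BC\mu$), the injectivity and surjectivity arguments of Proposition~\ref{FCcduale}, and the concluding duality and Hahn--Banach arguments all carry over unchanged, and both parts of the theorem, including the asserted density statements, follow.
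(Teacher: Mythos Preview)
Your plan is correct and follows the paper's own route: the paper gives no detailed argument here, merely asserting that ``straightforward adaptations of the arguments of the previous section yield the following result (see Remark~\ref{r: locdoub}).'' You are in fact more thorough than the paper, which flags only the local-doubling modification, whereas you also isolate and resolve the vector-space identity $\Fcontc{M}=\COc{M}$ (and its $L^q$ analogue), which in the small-ball setting genuinely requires the partition-of-unity plus telescoping/chaining argument---underwritten by the approximate midpoint property---that you outline.
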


\end{document}